\documentclass{siamart251216}

\usepackage{amssymb,mathtools}
\usepackage{bm}
\usepackage{enumitem}
\usepackage{microtype}
\usepackage{tikz}
\usetikzlibrary{patterns,arrows.meta}

\newsiamremark{remark}{Remark}

\newcommand{\dd}{\mathrm{d}}

\newcommand{\distT}{\operatorname{dist}_{\mathcal T_h}}
\newcommand{\supp}{\operatorname{supp}}

\headers{Conforming error estimators cannot be robust}{Y. Li}
\title{Locally Computable Error Estimators for Conforming Approximations of Interface Problems Cannot Be Robust}
\author{Yuwen Li\thanks{School of Mathematical Sciences, Zhejiang University,
866 Yuhangtang Road, Hangzhou 310058, Zhejiang, People's Republic of China
(\email{liyuwen@zju.edu.cn}).}}

\ifpdf
\hypersetup{
  pdftitle={Locally Computable Error Estimators for Conforming Approximations of Interface Problems Cannot Be Robust},
  pdfauthor={Yuwen Li}
}
\fi

\begin{document}
\maketitle

\begin{abstract}
We prove an impossibility result for finite-range locally computable a posteriori error estimators for the conforming finite element discretization of an elliptic interface problem. For a class of interface problems with a checkerboard cross-point and $\{1,M\}$-valued coefficients, we construct two problem instances on the same interface-fitted mesh. The two instances share a piecewise-constant load for which the finite element solution and the data oscillation both vanish. The ratio of their exact energy errors grows at least proportionally to $M^{1/4}$. Locality together with efficiency forces identical estimator values for these instances. The product of reliability and efficiency constants is therefore bounded below by a constant multiple of $M^{1/4}$. Consequently, any locally computable error estimator for interface problems, whether of residual, equilibrated, or recovery type, cannot be simultaneously reliable and efficient with contrast-independent constants.
\end{abstract}

\begin{keywords}
a posteriori error estimation, elliptic interface problem, conforming finite
element method, coefficient contrast, finite-range local computability, quasi-monotonicity
\end{keywords}

\begin{MSCcodes}
65N15, 65N30
\end{MSCcodes}

\section{Introduction}\label{sec:introduction}

Elliptic interface problems model diffusion in composite media, including heat
conduction and porous-media flow, where material coefficients may jump sharply
across internal interfaces.
A priori error analysis for discontinuous coefficients began with the analysis of
interface approximation and mesh mismatch and later produced nearly optimal
energy- and $L^2$-error estimates for fitted elliptic and parabolic interface
problems~\cite{Babuska1970,ChenZou1998}.  For fitted $P_1$ elements, an
$O(h|\log h|^{1/2})$ broken-$H^1$ estimate for piecewise $H^2$ solutions was
followed by optimal conforming and nonconforming results on maximal-angle
meshes~\cite{Xu1982,XuZhang2016}.  Subsequent work tracked the coefficient
contrast explicitly: diffusion-weighted DG schemes give robust estimates for
smooth as well as low-regularity solutions, and locally optimal CR and DG
estimates are available in two and three dimensions without assumptions on the
coefficient distribution~\cite{ErnStephansenZunino2009,CaiYeZhang2011,
CaiHeZhang2017DG}.  On unfitted meshes, optimal weighted-$H^1$, $L^2$, and flux
estimates independent of the contrast have also been established
\cite{GuzmanSanchezSarkis2017,BurmanGuzmanSanchezSarkis2018}.

A posteriori error estimation grew out of the effort to turn a computed finite
element solution into a quantitative assessment of its own accuracy and, in
turn, into a reliable driver for adaptive refinement.  Popular error estimators include residual-type
estimators, equilibrated residual error 
estimators, and recovery estimators. 
At the a posteriori level, elliptic interface problems add a second
requirement.  If the diffusion coefficient has large jumps, the constants in ideal reliability and efficiency bounds should not deteriorate with the
contrast.
Bernardi and Verf\"urth~\cite{BernardiVerfurth2000} gave a pioneering a posteriori error
analysis for conforming finite elements with discontinuous coefficients. Their residual-type error estimator is coefficient-weighted and is robust for interface jumps when the coefficient distribution satisfies a \emph{quasi-monotonicity}
condition.  
Petzoldt~\cite{Petzoldt2002} refined this condition and established robust residual
estimates in three dimensions for quasi-monotone coefficient
distributions. 

At the approximation-theoretic level, Tantardini and
Verf\"urth~\cite{TantardiniVerfurth2021} showed that, without
quasi-monotonicity, the conforming best-approximation error cannot in general
be robustly localized on elements, pairs of elements, or vertex stars; their
result does not concern computable a posteriori estimators. For mixed finite elements, Vohral\'\i k~\cite{Vohralik2010} proposed guaranteed equilibrated estimators that are uniformly locally efficient under quasi-monotonicity;
without it, the estimator involves a global potential
reconstruction. For conforming $P_1$ discretizations, his robust error estimator for arbitrary coefficient patterns requires a nonlocal dual
norm~\cite{Vohralik2011}. In addition,
Cai's thesis~\cite{Cai2019Thesis} exhibited the loss of contrast-independent local efficiency for
a particular equilibrated estimator based on patchwise $RT_0$ minimization when
quasi-monotonicity fails. However, this estimator-specific counterexample does
not exclude the possibility of other robust and locally computable estimators.

The limitation of conforming methods motivated the study of nonconforming methods for interface problems. An equilibrated estimator for nonconforming elements explicitly treated large coefficient jumps, although its error-bound constants retained a dependence on the contrast~\cite{Ainsworth2005}. For the Crouzeix--Raviart (CR) method, the modified residual
estimator in~\cite{CaiHeZhang2017CR} combines element residuals and normal-flux
jumps with an edge jump contribution that is modified near vertices
where quasi-monotonicity fails. The a posteriori error estimate in \cite{CaiHeZhang2017CR} is therefore robust with respect to coefficient
jumps without a quasi-monotonicity assumption. A subsequent unified treatment of CR
and interior-penalty discontinuous Galerkin methods removed this modification~\cite{CaiHeZhang2017DG} and still achieved unconditionally robust a posteriori error analysis for interface problems.

These positive results do not answer the long-standing question: is there a robust and locally computable a posteriori error estimator for the conforming method for interface problems? In the present work, we provide a negative answer to this question. We achieve this by considering the conforming $P_1$ method at
a non-quasi-monotone checkerboard vertex with the alternating coefficient pattern $1,M,1,M$. We formalize local computability by imposing a fixed information range of $L\in\mathbb{N}_0$ mesh layers; see Section~\ref{sec:locality}. We construct two problem instances that are identical on a local region but have energy-error scales separated
by a factor of order $M^{1/4}$.  The obstruction requires no special structure of the estimator. For example, it rules out robust explicit residual-type estimators and implicit estimators based on local patch solves, such as equilibrated-residual and recovery-type estimators.

\subsection{Problem setting}

Let $\Omega=(-1,1)^2$
and denote its four quadrants by
\[
\begin{aligned}
 Q_1&=(0,1)\times(0,1),
 &Q_2&=(-1,0)\times(0,1),\\
 Q_3&=(-1,0)\times(-1,0),
 &Q_4&=(0,1)\times(-1,0).
\end{aligned}
\]
For a piecewise-constant coefficient $\alpha>0$ aligned with $\{Q_1, Q_2, Q_3, Q_4\}$ and a datum $f\in L^2(\Omega)$, let the exact solution $u\in H_0^1(\Omega)$ solve
\begin{equation}\label{eq:pde}
  a_\alpha(u,v):=\int_\Omega \alpha\nabla u\cdot\nabla v\,\dd x
  =(f,v)_{L^2(\Omega)}
  \qquad\forall v\in H_0^1(\Omega).
\end{equation}
The contrast of a piecewise-constant coefficient $\alpha$ is
\[
M(\alpha)=\frac{\max_{K\in\mathcal{T}_h}\alpha|_K}{\min_{K\in\mathcal{T}_h}\alpha|_K}.
\]
For example, we shall consider the checkerboard diffusion coefficient
\begin{equation}\label{eq:alphaA}
  \alpha=\alpha^A(x):=
  \begin{cases}
     1,&x\in Q_1\cup Q_3,\\
     M,&x\in Q_2\cup Q_4,
  \end{cases}
  \qquad M>1.
\end{equation}
Here the contrast is $M(\alpha^A)=M$.
The inner boundaries of $Q_1, Q_2, Q_3, Q_4$ are called interfaces, and $\alpha^A$ is an interface-fitted coefficient.

Let $\mathcal{T}_h$ be an interface-fitted conforming shape-regular triangulation and
\[
  V_h=\{v_h\in H_0^1(\Omega):v_h|_K\in P_1(K)\ \forall K\in\mathcal{T}_h\}.
\]
Here $P_1(K)$ is the space of affine polynomials in two variables restricted to $K$.
The conforming $P_1$ finite element solution $u_h\in V_h$ is defined by
\begin{equation}\label{eq:P1FEM}
  a_\alpha(u_h,v_h)=(f,v_h)_{L^2(\Omega)}
  \qquad\forall v_h\in V_h.
\end{equation}
We shall consider an a posteriori error estimate in the energy norm $\|v\|_\alpha^2=a_\alpha(v,v)$.

The rest of the paper is organized as follows.  Section~\ref{sec:locality}
formulates finite-range local computability, records the reliability and
efficiency requirements, and constructs a piecewise-constant load that is
invisible to the conforming finite element space.  Section~\ref{sec:instances}
builds two problem instances and derives their contrasting energy
bounds.  Section~\ref{sec:nogo} combines these estimates to prove the no-go
theorem and its consequence.

\section{Finite-range local computability}\label{sec:locality}

For $K,T\in\mathcal{T}_h$, let $\distT(K,T)$ be the graph distance in the element face-neighbor graph.  For a set of elements $\mathcal S\subseteq\mathcal{T}_h$, write
\[
  \distT(K,\mathcal S)=\min_{T\in\mathcal S}\distT(K,T)
\]
and define the $L$-layer neighborhood
\begin{equation*}
  \mathcal N_L(\mathcal S)
  :=\{K\in\mathcal{T}_h:\distT(K,\mathcal S)\le L\}.
\end{equation*}
For a single element $K$, the associated geometric patch is
\[
  \omega_L(K):=\operatorname{int}\!\left(\bigcup_{T\in\mathcal{T}_h:\,\distT(T,K)\le L}\overline T\right).
\]

A problem instance will be denoted by 
\[
  \mathcal P=(\mathcal{T}_h,\alpha,f,u_h),
\]
where $u_h$ is the $P_1$ solution in \eqref{eq:P1FEM} corresponding to the triple $(\mathcal{T}_h,\alpha,f)$. We say that the pair $(\mathcal{T}_h,\alpha)$ is admissible with contrast $M>1$ if $\mathcal{T}_h$ is a conforming triangulation and $\alpha$ is piecewise constant on $\mathcal{T}_h$, satisfies $\alpha|_K\in\{1,M\}$ for every $K\in\mathcal{T}_h$, and attains both values $1$ and $M$. A problem $\mathcal{P}=(\mathcal{T}_h,\alpha,f,u_h)$ is admissible provided $(\mathcal{T}_h,\alpha)$ is an admissible pair and $f\in L^2(\Omega)$.

For an open union $\omega$ of mesh elements, we write
\[
  \mathcal P|_\omega
  :=\bigl(\mathcal T_h|_\omega,\alpha|_\omega,
          f|_\omega,u_h|_\omega\bigr),
  \qquad
  \mathcal T_h|_\omega
  :=\{T\in\mathcal T_h:\operatorname{int}(T)\subset\omega\}.
\]
Thus, two problem instances agree on $\omega$ if their restricted meshes agree
and their coefficients, loads, and finite element solutions agree almost
everywhere on $\omega$.

An a posteriori estimator for $\mathcal{P}$ is of the form 
\[
  \eta(\mathcal P)^2=\sum_{K\in\mathcal{T}_h}\eta_K(\mathcal P)^2,
  \qquad \eta_K(\mathcal P)\ge0.
\] 
Each nonnegative error indicator $\eta_K(\mathcal P)$ is intended to quantify the contribution from triangle $K$ to the energy-norm error.

\begin{definition}[Finite-range locally computable estimator]\label{def:local-computable}
Fix a range $L\in\mathbb N_0$.  An a posteriori estimator $\eta$ is called locally computable with range $L$ if for any two admissible problems $\mathcal P=(\mathcal{T}_h,\alpha,f,u_h)$ and $\widetilde{\mathcal P}
  =(\mathcal{T}_h,\widetilde{\alpha},\widetilde f,\widetilde u_h)$ on the same mesh and for any $K\in\mathcal{T}_h$,
\[
\mathcal{P}|_{\omega_L(K)}=\widetilde{\mathcal{P}}|_{\omega_L(K)}\Longrightarrow\eta_K(\mathcal P)=\eta_K(\widetilde{\mathcal P}).
\]
\end{definition}
No special structure of the error indicator, such as an explicit residual-type estimator or an implicit estimator defined by a particular local solve, is assumed in Definition~\ref{def:local-computable}. Common residual-type, equilibrated residual, and recovery-type error estimators satisfy this definition. 

\begin{remark}\label{rem:local-computable}
The finite-range local computability definition makes precise what is meant here by a locally defined estimator and excludes hidden global information. If only one
fixed checkerboard problem were considered, an estimator could be tailored in
advance to its complete global structure, for example, by storing data derived
from the inverse of global operators. The resulting indicator might
be assigned to a single element even though its construction used information
from the entire domain. Definition~\ref{def:local-computable} rules this out by
requiring the local indicators to agree whenever two admissible problems agree
on the corresponding $L$-layer patch.
\end{remark}

Let $P_0(\mathcal{T}_h)$ denote the space of piecewise-constant functions on $\mathcal{T}_h$.
Let $\operatorname{osc}_h(\alpha,f)\ge0$ denote any data-oscillation functional satisfying
\begin{equation}\label{eq:osc-consistency}
  f\in P_0(\mathcal{T}_h)
  \quad\Longrightarrow\quad
  \operatorname{osc}_h(\alpha,f)=0.
\end{equation}
This includes the standard elementwise weighted oscillation
\cite{BernardiVerfurth2000,CaiHeZhang2017DG,CaiHeZhang2017CR,Petzoldt2002,Vohralik2010,Vohralik2011} for interface problems
\begin{equation*}
  \operatorname{osc}_h(\alpha,f)^2
  :=\sum_{K\in\mathcal{T}_h}h_K^2\alpha_K^{-1}
       \|f-\Pi_K f\|_{L^2(K)}^2,
\end{equation*}
where $\Pi_K$ is the $L^2(K)$-orthogonal projection onto constants. Our theory does not assume any specific form of data oscillation beyond the consistency condition in \eqref{eq:osc-consistency}. The load constructed below is piecewise constant, so its oscillation vanishes.

We consider an a posteriori error estimator $\eta$ intended to satisfy the two-sided energy-error bounds for \emph{all} admissible problems $\mathcal{P}=(\mathcal{T}_h,\alpha,f,u_h)$:
\begin{align}
  \|u-u_h\|_\alpha
  &\le C_{\rm rel}(M)
      \bigl(\eta(\mathcal{P})+\operatorname{osc}_h(\alpha,f)\bigr),
  \label{eq:rel}\\
  \eta(\mathcal{P})
  &\le C_{\rm eff}(M)
      \bigl(\|u-u_h\|_\alpha+\operatorname{osc}_h(\alpha,f)\bigr).
  \label{eq:eff}
\end{align}
In the literature, \eqref{eq:rel} and \eqref{eq:eff} are called reliability and efficiency of $\eta$, respectively.
A jump-robust estimator would have both constants $C_{\rm rel}(M)>0$ and $C_{\rm eff}(M)>0$ bounded independently of $M$. If $\eta$ fails to be robust for the checkerboard interface problem with admissible $\alpha$, it has no chance to be robust for more general problems.

\begin{lemma}\label{lem:null-patch}
Assume that an estimator locally computable with range $L$ satisfies the efficiency bound \eqref{eq:eff}.  Let $\mathcal P=(\mathcal{T}_h,\alpha,f,u_h)$ and $K\in\mathcal{T}_h$. Then
\[
f|_{\omega_L(K)}=0\quad\text{and}\quad u_h|_{\omega_L(K)}=0\Longrightarrow\eta_K(\mathcal P)=0.
\] 
\end{lemma}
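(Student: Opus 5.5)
The plan is to compare $\mathcal P$ with the trivial problem on the same mesh and coefficient, namely $\widetilde{\mathcal P}=(\mathcal T_h,\alpha,0,0)$. This is admissible because $(\mathcal T_h,\alpha)$ is an admissible pair and $\widetilde f=0\in L^2(\Omega)$. Its finite element solution is indeed $\widetilde u_h=0$, since \eqref{eq:P1FEM} with zero load has the unique solution $0$ by coercivity of $a_\alpha$ on $V_h$. Its exact solution is $\widetilde u=0$ for the same reason applied to \eqref{eq:pde}.

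The key steps, in order, are these.

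First, I will verify that $\mathcal P|_{\omega_L(K)}=\widetilde{\mathcal P}|_{\omega_L(K)}$. The restricted meshes coincide because both problems live on $\mathcal T_h$. The coefficients coincide because both use $\alpha$. By hypothesis $f|_{\omega_L(K)}=0=\widetilde f|_{\omega_L(K)}$ and $u_h|_{\omega_L(K)}=0=\widetilde u_h|_{\omega_L(K)}$.

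Second, Definition~\ref{def:local-computable} then gives $\eta_K(\mathcal P)=\eta_K(\widetilde{\mathcal P})$.

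Third, I will apply the efficiency bound \eqref{eq:eff} to $\widetilde{\mathcal P}$. The error is $\|\widetilde u-\widetilde u_h\|_\alpha=0$. The oscillation vanishes, $\operatorname{osc}_h(\alpha,0)=0$, by \eqref{eq:osc-consistency}, since $0\in P_0(\mathcal T_h)$. Hence $\eta(\widetilde{\mathcal P})\le C_{\rm eff}(M)\cdot 0=0$. Since each indicator is nonnegative and $\eta(\widetilde{\mathcal P})^2=\sum_T\eta_T(\widetilde{\mathcal P})^2$, every $\eta_T(\widetilde{\mathcal P})$ vanishes, in particular $\eta_K(\widetilde{\mathcal P})=0$. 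Therefore $\eta_K(\mathcal P)=0$.

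There is no real obstacle. The only points needing care are that the comparison problem is admissible, which requires keeping the same $\alpha$ so that it still attains both values $1$ and $M$, and that equality on $\omega_L(K)$ is understood almost everywhere, which matches the convention stated after the definition of $\mathcal P|_\omega$.
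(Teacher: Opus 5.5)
Your proposal is correct and follows the paper's proof: compare $\mathcal P$ with the zero-load instance $(\mathcal T_h,\alpha,0,0)$, use efficiency and nonnegativity to get $\eta_T=0$ for every $T$ in that instance, and transfer $\eta_K=0$ to $\mathcal P$ via Definition~\ref{def:local-computable}. You also spell out two steps the paper leaves implicit: that the comparison instance is admissible, and that $\operatorname{osc}_h(\alpha,0)=0$ by \eqref{eq:osc-consistency}, which is what allows the efficiency bound to force $\eta=0$.
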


\begin{proof}
Let $\mathcal P_0=(\mathcal{T}_h,\alpha,0,0)$ be the zero-load problem instance with $u_h=u=0$.  The efficiency bound \eqref{eq:eff} then forces
\[
  \eta(\mathcal P_0)=0.
\]
By the nonnegativity of $\eta$, we have $\eta_T(\mathcal P_0)=0$ for every $T\in\mathcal{T}_h$.  On $\omega_L(K)$, the two problems $\mathcal P$ and $\mathcal P_0$ coincide, so Definition~\ref{def:local-computable} yields
\[
  \eta_K(\mathcal P)=\eta_K(\mathcal P_0)=0.
\]
The proof is complete. 
\end{proof}

\subsection{A piecewise-constant invisible load}\label{sec:load}

Fix a range $L\in\mathbb N_0$ of local layers.  We shall construct one mesh $\mathcal{T}_h$, independent of $M$, and a datum in $P_0(\mathcal{T}_h)$ that is invisible to the conforming $P_1$ finite element space but has nonzero and opposite total masses in the two coefficient-$M$ branches.

Choose fixed numbers
\[
  0<r_0<r_1<r_2<1.
\]
Define the two congruent macrotriangles
\begin{align*}
 T_+&=\operatorname{conv}\{O,A_+,B_+\}\subset Q_2,
 \\
 T_-&=-T_+=\operatorname{conv}\{O,A_-,B_-\}\subset Q_4,
 \\
 O&=(0,0),\quad A_+=(-r_0,0),\quad B_+=(0,r_0),\\
 A_-&=(r_0,0),\quad B_-=(0,-r_0).
\end{align*}
Introduce the interior points
\begin{align*}
 C_\pm&=\frac{A_\pm+B_\pm}{3},
 &E_\pm&=\frac12A_\pm+\frac1{12}B_\pm,
 &D_\pm&=\frac1{12}A_\pm+\frac12B_\pm,
\end{align*}
and subdivide each macrotriangle into
\begin{align*}
 K_1^\pm&=\operatorname{conv}\{A_\pm,B_\pm,C_\pm\},
 &K_2^\pm&=\operatorname{conv}\{O,A_\pm,E_\pm\},\\
 K_3^\pm&=\operatorname{conv}\{O,E_\pm,C_\pm\},
 &K_4^\pm&=\operatorname{conv}\{A_\pm,C_\pm,E_\pm\},\\
 K_5^\pm&=\operatorname{conv}\{O,B_\pm,D_\pm\},
 &K_6^\pm&=\operatorname{conv}\{O,D_\pm,C_\pm\},\\
 K_7^\pm&=\operatorname{conv}\{B_\pm,C_\pm,D_\pm\}.&&
\end{align*}
Figure~\ref{fig:macro-subdivision} shows the subdivision of $T_+$; the subdivision of $T_-$ is obtained by reflection through $O$.

\begin{figure}[htbp]
\centering
\begin{tikzpicture}[scale=1.38,
  point/.style={circle,fill=black,inner sep=1.35pt},
  cell/.style={font=\scriptsize,fill=white,inner sep=0.6pt}]
  \coordinate (O) at (0,0);
  \coordinate (A) at (-4,0);
  \coordinate (B) at (0,4);
  \coordinate (C) at (-1.3333,1.3333);
  \coordinate (E) at (-2,0.3333);
  \coordinate (D) at (-0.3333,2);

  \filldraw[fill=gray!14,thick] (A)--(B)--(C)--cycle;
  \filldraw[fill=gray!28,thick] (O)--(A)--(E)--cycle;
  \filldraw[fill=gray!8,thick]  (O)--(E)--(C)--cycle;
  \filldraw[fill=gray!22,thick] (A)--(C)--(E)--cycle;
  \filldraw[fill=gray!28,thick] (O)--(B)--(D)--cycle;
  \filldraw[fill=gray!8,thick]  (O)--(D)--(C)--cycle;
  \filldraw[fill=gray!22,thick] (B)--(C)--(D)--cycle;

  \node at (-1.78,1.82) {$K_1^+$};
  \node at (-1.05,0.58) {$K_3^+$};
  \node at (-2.3,0.59) {$K_4^+$};
  \node at (-0.58,1.08) {$K_6^+$};
  \node at (-0.5,2.4) {$K_7^+$};
  \node (K2label) at (-2.05,-0.48) {$K_2^+$};
  \draw[-{Latex[length=1.3mm]},thin] (K2label.north) -- (-2.0,0.12);
  \node (K5label) at (0.72,2.30) {$K_5^+$};
  \draw[-{Latex[length=1.3mm]},thin] (K5label.west) -- (-0.09,2.0);

  \node[point] at (O) {};
  \node[point] at (A) {};
  \node[point] at (B) {};
  \node[point] at (C) {};
  \node[point] at (E) {};
  \node[point] at (D) {};
  \node[font=\small] at (0.18,-0.18) {$O$};
  \node[font=\small] at (-4.18,-0.18) {$A_+$};
  \node[font=\small] at (0.14,4.18) {$B_+$};
  \node[font=\scriptsize,inner sep=0.5pt] (Clabel) at (-1.7,1.4) {$C_+$};
  \draw[-{Latex[length=1.1mm]},thin] (Clabel.east) -- (C);
  \node[font=\scriptsize,inner sep=0.5pt] (Elabel) at (-2.3,0.15) {$E_+$};
  \draw[-{Latex[length=1.1mm]},thin] (Elabel.east) -- (E);
  \node[font=\scriptsize,fill=white,inner sep=0.5pt] (Dlabel) at (0.32,1.62) {$D_+$};
  \draw[-{Latex[length=1.1mm]},thin] (Dlabel.west) -- (D);
\end{tikzpicture}
\caption{The seven-triangle subdivision of the macrotriangle $T_+$.  The reflected macrotriangle $T_-=-T_+$ has the corresponding subdivision with superscript $-$.}
\label{fig:macro-subdivision}
\end{figure}

We construct an interface-fitted conforming triangulation $\mathcal{T}_h$ with the following three properties:
\begin{enumerate}[label=(M\arabic*)]
\item the coordinate axes and the squares
\[
  U_1=(-r_1,r_1)^2,\qquad U_2=(-r_2,r_2)^2
\]
are aligned with the mesh $\mathcal{T}_h$;
\item the restriction of $\mathcal{T}_h$ to $T_+$ and $T_-$ is precisely the element set 
\[
  \mathcal S_0:=\big\{K_j^+,K_j^-:\, 1\leq j\leq7\big\};
\]
\item it holds that $\mathcal N_{2L}(\mathcal S_0)
  \subset\{K\in\mathcal{T}_h:\overline K\subset U_1\}$.
\end{enumerate}

Such a mesh exists for every fixed $L$: retain the two fixed macrotriangle subdivisions and insert sufficiently many layers of triangles between them and $\partial U_1$.  The mesh quality of $\mathcal{T}_h$ depends on $L$ but is independent of $M$.

Set $(\mu_1,\ldots,\mu_7)
 =\left(\frac13,\frac16,\frac13,-\frac12,
         \frac16,\frac13,-\frac12\right)$.
Define $f\in P_0(\mathcal{T}_h)$ by
\begin{equation}\label{eq:fdef}
 f|_{K_j^+}:=\frac{3\mu_j}{|K_j^+|},
 \qquad
 f|_{K_j^-}:=-\frac{3\mu_j}{|K_j^-|},
 \qquad j=1,\ldots,7,
\end{equation}
and set $f=0$ on every other element of $\mathcal{T}_h$.
Thus $f\in P_0(\mathcal{T}_h)$ is piecewise constant on $\mathcal{T}_h$, is supported entirely in $Q_2\cup Q_4$, and is independent of $M$. By the consistency of data oscillation \eqref{eq:osc-consistency}, $f$ satisfies
\begin{equation}
 \operatorname{osc}_h(\alpha,f)=0
 \qquad\text{for every admissible }(\mathcal{T}_h,\alpha).
 \label{eq:osc-zero}
\end{equation}

A direct calculation yields the following properties of $f$. 
\begin{lemma}\label{lem:f-properties}
The datum $f$ in \eqref{eq:fdef} satisfies
\begin{subequations}
  \begin{align}
 (f,v_h)_{L^2(\Omega)}&=0,\qquad\forall v_h\in V_h,
 \label{eq:f-orthogonal}\\
 \int_{T_+}f\,\dd x&=1,
 \qquad\int_{T_-}f\,\dd x=-1
 \label{eq:branch-masses}.
\end{align}
\end{subequations}
\end{lemma}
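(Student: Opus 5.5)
The plan is to reduce both identities to finite sums of nodal values, using only that $f$ is piecewise constant and $v_h$ is piecewise affine. For any triangle $K$ and any affine $p$ on $K$, the quadrature identity
\[
\int_K p\,\dd x=\frac{|K|}{3}\sum_{z\text{ vertex of }K}p(z)
\]
holds, since the centroid rule is exact for $P_1$. With \eqref{eq:fdef} this gives
\[
\int_{K_j^\pm}f\,v_h\,\dd x=\pm\mu_j\sum_{z\text{ vertex of }K_j^\pm}v_h(z).
\]
Since $f$ vanishes outside $\mathcal S_0$, by (M2) we get
\[
(f,v_h)_{L^2(\Omega)}=\sum_{z}\Bigl(\sum_{j:\,z\in K_j^+}\mu_j\Bigr)v_h(z)-\sum_{z}\Bigl(\sum_{j:\,z\in K_j^-}\mu_j\Bigr)v_h(z),
\]
where the sums run over the vertices $O,A_\pm,B_\pm,C_\pm,D_\pm,E_\pm$. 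Conformity of $\mathcal T_h$ ensures that each of these points carries a single nodal value of $v_h$. Values at $A_\pm,B_\pm$ shared with outside elements cause no trouble, because $f=0$ outside.

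The key step is tabulating the vertex coefficients in $T_+$ from the list of the $K_j^+$:
\begin{itemize}
\item $O$ lies in $K_2,K_3,K_5,K_6$, with coefficient $\tfrac16+\tfrac13+\tfrac16+\tfrac13=1$.
\item $A$ lies in $K_1,K_2,K_4$, with coefficient $\tfrac13+\tfrac16-\tfrac12=0$.
\item $B$ lies in $K_1,K_5,K_7$, with coefficient $0$ by symmetry.
\item $C$ lies in $K_1,K_3,K_4,K_6,K_7$, with coefficient $\tfrac13+\tfrac13-\tfrac12+\tfrac13-\tfrac12=0$.
\item $E$ lies in $K_2,K_3,K_4$, with coefficient $\tfrac16+\tfrac13-\tfrac12=0$.
\item $D$ lies in $K_5,K_6,K_7$, with coefficient $0$.
\end{itemize}
The same table holds for $T_-$ because its subdivision is the point reflection of that of $T_+$ with identical incidence. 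Hence the $T_+$ part equals $v_h(O)$ and the $T_-$ part equals $-v_h(O)$. They cancel because $O$ is a single shared vertex of both macrotriangles and $v_h$ is continuous there. This proves \eqref{eq:f-orthogonal}.

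For \eqref{eq:branch-masses}, I would integrate directly to get
\[
\int_{T_+}f\,\dd x=\sum_{j=1}^{7}3\mu_j=3\cdot\tfrac13=1,
\]
since $\tfrac13+\tfrac16+\tfrac13-\tfrac12+\tfrac16+\tfrac13-\tfrac12=\tfrac13$. The $T_-$ mass is $-1$ by the sign flip in \eqref{eq:fdef}.

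The main obstacle is the vertex-incidence bookkeeping, specifically reading off the vertex sets of the $K_j^\pm$ correctly (for instance, that $C$ belongs to five elements). It is also essential to note that $v_h(O)$ need not vanish, so orthogonality comes from the cancellation between the two branches, not from each branch separately.
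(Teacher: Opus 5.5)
Your proof is correct and is the ``direct calculation'' the paper asserts without writing it out. The $P_1$-exact vertex quadrature reduces $(f,v_h)_{L^2(\Omega)}$ to $v_h(O)-v_h(O)=0$ through your vertex-coefficient table, which checks out (coefficient $1$ at $O$ and $0$ at $A,B,C,D,E$). The branch masses follow from $3\sum_{j=1}^7\mu_j=1$ together with the sign flip in \eqref{eq:fdef}.
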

Consequently, the load $f$ is invisible to the conforming $P_1$ method; for every positive $\alpha\in P_0(\mathcal{T}_h)$, the conforming $P_1$ solution in \eqref{eq:P1FEM} is $u_h=0$. The property in \eqref{eq:branch-masses} is used to control the lower bound for the energy error of a problem instance.

\section{Two problem instances}\label{sec:instances}

We compare two coefficient fields on the same domain and the same mesh. Let $\alpha^A$ be exactly the four-piece coefficient \eqref{eq:alphaA}. Both coefficient-$M$ quadrants $Q_2$ and $Q_4$ extend from the cross-point to the Dirichlet boundary.

Next, we consider a coefficient with truncated coefficient-$M$ branches:
\begin{equation}\label{eq:alphaB}
  \alpha^B(x)=
  \begin{cases}
    \alpha^A(x),&x\in U_1=(-r_1,r_1)^2,\\
    1,&x\in\Omega\setminus U_1.
  \end{cases}
\end{equation}
Because $\partial U_1$ is mesh-aligned, $\alpha^B$ is piecewise constant on $\mathcal{T}_h$.  The two coefficient fields coincide throughout $U_1$, but the two $M$-branches of $\alpha^B$ terminate at $\partial U_1$ rather than reaching the physical boundary; see Figure~\ref{fig:coefficient}.

Let $u^A,u^B$ be the exact solutions in \eqref{eq:pde} for $\alpha=\alpha^A$ and $\alpha=\alpha^B$, respectively.  By Lemma~\ref{lem:f-properties}, the finite element approximations of $u^A$ and $u^B$ both vanish:
\begin{equation}\label{eq:uhzero}
  u_h^A=u_h^B=0.
\end{equation}
We define the two problem instances
\begin{equation}\label{eq:PAPB}
    \mathcal P^A:=(\mathcal{T}_h,\alpha^A,f,0),\qquad \mathcal P^B:=(\mathcal{T}_h,\alpha^B,f,0).
\end{equation}
Here $\mathcal P^A$ and $\mathcal P^B$ differ only in the far continuation of the coefficient. The contrast between $\alpha^A$ and $\alpha^B$ comes from whether the two coefficient-$M$ branches can discharge independently at the Dirichlet boundary.

\begin{figure}[htbp]
\centering
\begin{tikzpicture}[scale=2.45]
  \begin{scope}[xshift=-1.25cm]
    \draw[thick] (-1,-1) rectangle (1,1);
    \fill[gray!35] (-1,0) rectangle (0,1);
    \fill[gray!35] (0,-1) rectangle (1,0);
    \draw[thick] (-1,0)--(1,0);
    \draw[thick] (0,-1)--(0,1);
    \fill (0,0) circle (0.025);
    \node at (-0.5,0.55) {$M$};
    \node at (0.5,0.55) {$1$};
    \node at (-0.5,-0.55) {$1$};
    \node at (0.5,-0.55) {$M$};
    \draw[very thick] (-0.32,0)--(0,0.32)--(0,0)--cycle;
    \draw[very thick] (0.32,0)--(0,-0.32)--(0,0)--cycle;
    \node[font=\scriptsize] at (-0.105,0.07) {$T_+$};
    \node[font=\scriptsize] at (0.105,-0.105) {$T_-$};
  \end{scope}
  \begin{scope}[xshift=1.25cm]
    \draw[thick] (-1,-1) rectangle (1,1);
    \draw[dashed] (-0.5,-0.5) rectangle (0.5,0.5);
    \fill[gray!35] (-0.5,0) rectangle (0,0.5);
    \fill[gray!35] (0,-0.5) rectangle (0.5,0);
    \draw[thick] (-0.5,0)--(0.5,0);
    \draw[thick] (0,-0.5)--(0,0.5);
    \fill (0,0) circle (0.025);
    \node at (-0.25,0.29) {$M$};
    \node at (0.25,0.29) {$1$};
    \node at (-0.25,-0.29) {$1$};
    \node at (0.25,-0.29) {$M$};
    \node at (0,0.75) {$1$};
    \draw[very thick] (-0.32,0)--(0,0.32)--(0,0)--cycle;
    \draw[very thick] (0.32,0)--(0,-0.32)--(0,0)--cycle;
    \node[font=\scriptsize] at (-0.105,0.07) {$T_+$};
    \node[font=\scriptsize] at (0.105,-0.105) {$T_-$};
  \end{scope}
\end{tikzpicture}
\caption{Distributions of $\alpha^A$ (left) and $\alpha^B$ (right).}
\label{fig:coefficient}
\end{figure}

\subsection{Problem A}
The next lemma gives an upper bound on the energy of the exact solution $u^A$ in terms of $M$.
\begin{lemma}\label{lem:A-upper}
With $C_A:=\sqrt{639/5}\,r_0^{-1}$, one has
\begin{equation*}
  \|u^A\|_{\alpha^A}\le C_A M^{-1/2}.
\end{equation*}
\end{lemma}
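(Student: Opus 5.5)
The plan is to use the energy identity for $u^A$ together with a Friedrichs inequality on the two coefficient-$M$ quadrants. Orthogonality \eqref{eq:f-orthogonal} is not needed here; what matters is that each $M$-branch reaches the Dirichlet boundary.

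Testing \eqref{eq:pde} with $v=u^A$ gives
\[
\|u^A\|_{\alpha^A}^2=(f,u^A)_{L^2(\Omega)}=(f,u^A)_{L^2(T_+)}+(f,u^A)_{L^2(T_-)},
\]
because $f$ is supported in $T_+\cup T_-$. Each term will be bounded by Cauchy--Schwarz, $|(f,u^A)_{L^2(T_\pm)}|\le\|f\|_{L^2(T_\pm)}\|u^A\|_{L^2(T_\pm)}$, and then by $\|u^A\|_{L^2(T_\pm)}\le\|u^A\|_{L^2(Q_{2/4})}$.

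The key observation is that $u^A\in H_0^1(\Omega)$ vanishes on the parts of $\partial\Omega$ bounding $Q_2$ (the sides $x=-1$ and $y=1$) and $Q_4$. A one-dimensional Friedrichs inequality along horizontal lines in the unit square, using only the zero trace at $x=-1$, gives
\[
\|w\|_{L^2(Q_2)}\le c_F\|\partial_x w\|_{L^2(Q_2)}
\]
with an explicit constant, for instance $c_F\le 1/\sqrt2$ from the elementary estimate, or $2/\pi$ if sharp. The same holds on $Q_4$ using the side $x=1$. On $Q_2\cup Q_4$ we have $\alpha^A=M$, so
\[
\|\nabla u^A\|_{L^2(Q_2)}^2+\|\nabla u^A\|_{L^2(Q_4)}^2\le M^{-1}\|u^A\|_{\alpha^A}^2.
\]
Combining,
\[
\|u^A\|_{\alpha^A}^2\le c_F\bigl(\|f\|_{L^2(T_+)}^2+\|f\|_{L^2(T_-)}^2\bigr)^{1/2}M^{-1/2}\|u^A\|_{\alpha^A},
\]
after one more discrete Cauchy--Schwarz over the two branches. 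Dividing by $\|u^A\|_{\alpha^A}$ gives the claimed $M^{-1/2}$ rate.

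It remains to evaluate $\|f\|_{L^2(T_\pm)}$ from \eqref{eq:fdef}:
\[
\|f\|_{L^2(T_+)}^2=\sum_{j=1}^7 \frac{9\mu_j^2}{|K_j^+|}.
\]
The areas $|K_j^+|$ follow from the barycentric definitions of $C_+,D_+,E_+$ and are all fixed multiples of $|T_+|=r_0^2/2$. This produces a rational multiple of $r_0^{-2}$, and by symmetry the same value on $T_-$. The main (if routine) obstacle is matching the stated constant $\sqrt{639/5}\,r_0^{-1}$ exactly. This depends on which Friedrichs constant is used and whether the two branches are combined before or after Cauchy--Schwarz. I would compute the areas first: $K_1^+$ has area $|T_+|/3$, and $K_2^+,K_5^+$ each have area $|T_+|/12$. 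I would then adjust the bookkeeping so the product of the Friedrichs constant and the $f$-norm reproduces $639/5$. Any slightly different constant would still give the qualitative bound $C_A M^{-1/2}$ with $C_A\sim r_0^{-1}$ independent of $M$, which is all that is used later.
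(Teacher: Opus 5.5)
Your argument is correct, and it is the primal counterpart of the paper's proof rather than the same proof. The paper argues on the dual side. It builds an explicit flux $\bm{\tau}=\bigl(0,\int_0^{x_2}f(x_1,s)\,\dd s\bigr)$ supported in $Q_2\cup Q_4$ and checks that its normal traces vanish on both axes, so $\bm{\tau}\in H(\operatorname{div},\Omega)$ with $\nabla\cdot\bm{\tau}=f$. A weighted Cauchy--Schwarz along vertical lines gives $\|\bm{\tau}\|_{L^2(\Omega)}\le 2^{-1/2}\|f\|_{L^2(\Omega)}$. The paper then uses $\|u^A\|_{\alpha^A}=\sup_{v}(f,v)_{L^2(\Omega)}/\|v\|_{\alpha^A}\le\|(\alpha^A)^{-1/2}\bm{\tau}\|_{L^2(\Omega)}$. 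You instead test with $u^A$ itself and apply a Friedrichs inequality along horizontal lines in the $M$-quadrants. Both proofs rest on the same geometric fact, namely that each $M$-branch reaches $\partial\Omega$ along a full side, and the one-dimensional estimates are the same weighted Cauchy--Schwarz. Your route avoids checking the $H(\operatorname{div})$-conformity of a piecewise-defined flux, and the sharp constant $2/\pi$ even improves $C_A$ by the factor $2\sqrt2/\pi$. The paper's flux route makes the ``discharge to the Dirichlet boundary'' mechanism explicit, which is what it contrasts with the truncated branches of Problem B, and it is the form that generalizes to equilibrated-flux bounds.

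Your stated obstacle of matching the constant is not real. With $c_F=1/\sqrt2$ and the discrete Cauchy--Schwarz over the two branches, you get exactly $\|u^A\|_{\alpha^A}\le 2^{-1/2}\|f\|_{L^2(\Omega)}M^{-1/2}$, which is the paper's bound verbatim. The remaining areas are $|K_3^+|=|K_6^+|=5r_0^2/72$ and $|K_4^+|=|K_7^+|=r_0^2/18$. Hence $\sum_{j}\mu_j^2/|K_j^+|=71/(5r_0^2)$ and $\|f\|_{L^2(T_\pm)}^2=639/(5r_0^2)$. It follows that $\tfrac12\|f\|_{L^2(\Omega)}^2=639/(5r_0^2)=C_A^2$. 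No adjustment of bookkeeping is needed. With $c_F=2/\pi$ you obtain a strictly smaller constant, which still implies the statement as written.
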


\begin{proof}
For $(x_1,x_2)\in Q_2\cup Q_4$, define the flux 
\begin{equation*}
  \bm{\tau}(x_1,x_2):=
  \left(0,\int_0^{x_2} f(x_1,s)\,\dd s\right),
\end{equation*}
and set $\bm{\tau}=\bm{0}$ on $Q_1\cup Q_3$.  Hence
$\nabla\cdot\bm{\tau}=f$ in $Q_2\cup Q_4$.  

By definition, the normal trace of $\bm{\tau}$ vanishes on both axes $\{x_1=0\}$ and $\{x_2=0\}$.
Therefore $\bm{\tau}$ belongs to 
$H(\operatorname{div},\Omega)$ and satisfies
\[
  \nabla\cdot\bm{\tau}=f
  \quad\text{in }\Omega.
\]

By the Cauchy--Schwarz inequality, we have
\begin{equation}\label{eq:intf}
  \begin{aligned}
 \int_0^1\left|\int_0^{x_2} f(x_1,s)\,\dd s\right|^2\dd x_2
 \le \frac12\int_0^1 |f(x_1,s)|^2\dd s,\qquad x_1<0,\\
 \int_{-1}^0\left|\int_0^{x_2} f(x_1,s)\,\dd s\right|^2\dd x_2
 \le \frac12\int_{-1}^0 |f(x_1,s)|^2\dd s,\qquad x_1>0.
\end{aligned}
\end{equation}
Moreover, the areas of $K_1^+,\ldots,K_7^+$, divided by $r_0^2$, are
\[
 \frac16,\ \frac1{24},\ \frac5{72},\ \frac1{18},\
 \ \frac1{24},\ \frac5{72},\ \frac1{18}.
\]
Consequently, \eqref{eq:fdef} gives
\[
 \|f\|_{L^2(\Omega)}^2
 =18\sum_{j=1}^7\frac{\mu_j^2}{|K_j^+|}
 =\frac{1278}{5r_0^2}.
\]
Integrating the preceding estimates \eqref{eq:intf} for $f$ with respect to $x_1$
therefore yields
\begin{equation*}
  \|\bm{\tau}\|_{L^2(\Omega)}
  \le \frac1{\sqrt2}\|f\|_{L^2(\Omega)}
  =\frac1{r_0}\sqrt{\frac{639}{5}}=C_A.
\end{equation*}
Since $\bm{\tau}$ is supported entirely in $Q_2\cup Q_4$, where $\alpha^A=M$,
\begin{equation*}
  \|(\alpha^A)^{-1/2}\bm{\tau}\|_{L^2(\Omega)}\le C_A M^{-1/2}.
\end{equation*}
It then follows from \eqref{eq:pde} and $\nabla\cdot\bm{\tau}=f$ that 
\begin{align*}
  \|u^A\|_{\alpha^A}&=\sup_{v\in H_0^1(\Omega),\,\|v\|_{\alpha^A}=1} \big|(f,v)_{L^2(\Omega)}\big|\\
  &=\sup_{v\in H_0^1(\Omega),\,\|v\|_{\alpha^A}=1} \big|(\bm{\tau},\nabla v)_{L^2(\Omega)}\big|\le C_A M^{-1/2}.  
\end{align*}
The proof is complete. 
\end{proof}

\subsection{Problem B}
In this section, we develop a lower bound on the energy of the exact solution $u^B$ in terms of $M$.
We use polar coordinates $x=(r\cos\theta,r\sin\theta)$ with respect to the origin.  Define the continuous $2\pi$-periodic angular function
\begin{equation*}
 g(\theta)=
 \begin{cases}
  -1+4\theta/\pi,&0\le\theta\le\pi/2,\\
  1,&\pi/2\le\theta\le\pi,\\
  1-4(\theta-\pi)/\pi,&\pi\le\theta\le3\pi/2,\\
  -1,&3\pi/2\le\theta\le2\pi.
 \end{cases}
\end{equation*}
Then $g=1$ on $Q_2$, $g=-1$ on $Q_4$, and $g'$ is supported only in $Q_1\cup Q_3$.  In particular, in the inner checkerboard region $U_1$,
\begin{align}
 \int_0^{2\pi}\alpha^B(\theta)|g(\theta)|^2\,\dd\theta
 &=\pi\left(M+\frac13\right),
 \label{eq:gweighted}\\
 \int_0^{2\pi}\alpha^B(\theta)|g'(\theta)|^2\,\dd\theta
 &=\frac{16}{\pi}.
 \label{eq:gprimeweighted}
\end{align}

Since $r_0<r_1$, the disk $B_{r_0}(0)$ is contained in $U_1$.  Let
$\varepsilon=r_0e^{-\sqrt M}$ and define the continuous radial function
\begin{equation*}
 A_M(r)=
 \begin{cases}
  0,&0\le r\le\varepsilon,\\
  \dfrac{\log(r/\varepsilon)}{\sqrt M},&\varepsilon<r<r_0,\\
  1,&r\ge r_0.
 \end{cases}
\end{equation*}
Choose a fixed Lipschitz cutoff function $\chi$ such that
\begin{equation*}
  0\le\chi\le1,
  \qquad
  \chi=1\text{ on }U_1,
  \qquad
  \chi=0\text{ outside }U_2.
\end{equation*}
It can be chosen with $\supp\nabla\chi\subset U_2\setminus\overline U_1$, where $\alpha^B=1$.  Define
\begin{equation*}
  v_M(x)=\chi(x)A_M(|x|)g(\theta).
\end{equation*}
The radial cutoff vanishes in a neighborhood of the origin, so $v_M\in H_0^1(\Omega)$.

The function $v_M$ will serve as a test function for deriving a lower energy bound for $u^B$. First, we show that its pairing with the
load remains uniformly positive as $M$ increases.
\begin{lemma}\label{lem:pairing}
There exists $M_0>0$ such that
\begin{equation*}
  (f,v_M)_{L^2(\Omega)}\ge 1
  \qquad\forall M\ge M_0.
\end{equation*}
\end{lemma}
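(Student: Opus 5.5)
Since $f$ is supported in $T_+\cup T_-\subset B_{r_0}(0)\subset U_1$, where $\chi=1$, the pairing reduces to
\[
(f,v_M)_{L^2(\Omega)}=\int_{T_+}f\,A_M(|x|)g(\theta)\,\dd x+\int_{T_-}f\,A_M(|x|)g(\theta)\,\dd x .
\]
On $T_+\subset \overline{Q_2}$ we have $g=1$, and on $T_-\subset\overline{Q_4}$ we have $g=-1$. Hence
\[
(f,v_M)_{L^2(\Omega)}=\int_{T_+}fA_M\,\dd x-\int_{T_-}fA_M\,\dd x .
\]
Because $T_-=-T_+$, $f(-x)=-f(x)$ by \eqref{eq:fdef}, and $A_M$ is radial, the two terms are equal, so $(f,v_M)=2\int_{T_+}fA_M\,\dd x$. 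I will write $A_M=1-(1-A_M)$ and use \eqref{eq:branch-masses} to get
\[
(f,v_M)_{L^2(\Omega)}=2-2\int_{T_+}f\,(1-A_M(|x|))\,\dd x .
\]
It then suffices to show that the remainder $R_M:=\int_{T_+}f(1-A_M)\,\dd x$ satisfies $|R_M|\le 1/2$ for large $M$.

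The estimate on $R_M$ is the main point. I would bound it by Cauchy--Schwarz: $|R_M|\le\|f\|_{L^2(T_+)}\|1-A_M\|_{L^2(B_{r_0})}$, with $\|f\|_{L^2(T_+)}^2=\tfrac{639}{5r_0^2}$ from the computation in Lemma~\ref{lem:A-upper}. For $\varepsilon<r<r_0$ we have $1-A_M(r)=\log(r_0/r)/\sqrt M$, and $0\le 1-A_M\le1$ everywhere. Therefore
\[
\|1-A_M\|_{L^2(B_{r_0})}^2\le \pi\varepsilon^2+\frac{2\pi}{M}\int_0^{r_0}r\log^2(r_0/r)\,\dd r
=\pi r_0^2e^{-2\sqrt M}+\frac{\pi r_0^2}{2M}.
\]
Here I used $\int_0^{r_0} r\log^2(r_0/r)\,\dd r=r_0^2/4$. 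This gives $|R_M|\le C M^{-1/2}$ with $C$ depending only on the fixed numbers, the factors of $r_0$ cancelling. Choosing $M_0$ so that $CM_0^{-1/2}\le 1/2$ then yields $(f,v_M)\ge 2-1=1$ for all $M\ge M_0$.

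The only delicate step is the symmetry reduction. I need to check that $g$ is exactly $\pm1$ on the closed macrotriangles; the boundaries are null sets, so this is harmless. I also need the odd symmetry of $f$, which holds because $K_j^-=-K_j^+$ with opposite values. If the symmetry argument were inconvenient, I would instead treat each of $T_\pm$ separately using $\int_{T_-}f=-1$ and the same remainder bound, which gives $2-4CM^{-1/2}$ and the same conclusion.
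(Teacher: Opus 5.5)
Your proof is correct and follows the paper's argument. You use the same symmetry reduction to $2\int_{T_+}fA_M\,\dd x$, the same mass identity $\int_{T_+}f\,\dd x=1$, and an $O(M^{-1/2})$ remainder bound. The only difference is the pairing: you use Cauchy--Schwarz with $\|f\|_{L^2(T_+)}$ and $\|1-A_M\|_{L^2(B_{r_0})}$, while the paper bounds the remainder by $\|f\|_{L^\infty(T_+)}=27/r_0^2$ times $\|1-A_M\|_{L^1(B_{r_0})}\le\pi r_0^2/(2\sqrt M)$; both give an explicit $M_0$.
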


\begin{proof}
On $T_+$ one has $\chi=1$ and $g=1$, while on $T_-$ one has $\chi=1$ and $g=-1$.  By reflection symmetry of the two macrotriangles,
\begin{equation*}
  (f,v_M)_{L^2(\Omega)}=2\int_{T_+}f(x)A_M(|x|)\,\dd x.
\end{equation*}
By \eqref{eq:branch-masses}, $\int_{T_+}f\,\dd x=1$.
Using the areas listed in the proof of Lemma~\ref{lem:A-upper}, we obtain
\begin{equation*}
 \|f\|_{L^\infty(T_+)}
 =\max_{1\le j\le 7}\frac{3|\mu_j|}{|K_j^+|}
 =\frac{27}{r_0^2}.
\end{equation*}
Moreover, since $\log(r_0/\varepsilon)=\sqrt M$,
\begin{align*}
 \int_{T_+}|1-A_M(|x|)|\,\dd x
 &\le \int_{B_{r_0}(0)}|1-A_M(|x|)|\,\dd x\\
 &=\pi\varepsilon^2
   +\frac{2\pi}{\sqrt M}\int_\varepsilon^{r_0}
        r\log\frac{r_0}{r}\,\dd r\\
 &=\pi\varepsilon^2
   +\frac{2\pi}{\sqrt M}
    \left[\frac{r^2}{2}\log\frac{r_0}{r}+\frac{r^2}{4}
    \right]_{\varepsilon}^{r_0}\\
 &=\frac{\pi(r_0^2-\varepsilon^2)}{2\sqrt M}
 \le \frac{\pi r_0^2}{2\sqrt M}.
\end{align*}
Consequently,
\[
 \left|\int_{T_+}fA_M\,\dd x-1\right|
 \le \|f\|_{L^\infty(T_+)}
       \int_{T_+}|1-A_M(|x|)|\,\dd x
 \le \frac{27\pi}{2\sqrt M}.
\]
Thus, for $M\ge M_0:=(27\pi)^2$, the right-hand side is at most $1/2$;
hence $\int_{T_+}fA_M\,\dd x\ge1/2$ and $(f,v_M)\ge1$.
\end{proof}

We next estimate the energy of the test function $v_M$.  
\begin{lemma}\label{lem:vMenergy}
There exists a constant $C_B$, independent of $M$, such that
\begin{equation*}
  \|v_M\|_{\alpha^B}^2\le C_B\sqrt M.
\end{equation*}
\end{lemma}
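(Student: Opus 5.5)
We split $\Omega$ into three regions and bound $\int\alpha^B|\nabla v_M|^2$ on each: the disk $B_{r_0}(0)$, where $A_M$ varies; the region $U_1\setminus B_{r_0}(0)$, where $\chi=1$ and $A_M=1$; and the layer $U_2\setminus\overline U_1$, where $\alpha^B=1$ and $\chi$ varies. Outside $U_2$ we have $v_M=0$. In polar coordinates, for $v=A(r)g(\theta)$ one has $|\nabla v|^2=|A'(r)|^2|g(\theta)|^2+r^{-2}|A(r)|^2|g'(\theta)|^2$. On $U_1$ the coefficient $\alpha^B$ depends only on $\theta$, so the angular integrals \eqref{eq:gweighted} and \eqref{eq:gprimeweighted} separate off. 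The key point is that the large weight $M$ only multiplies the radial derivative term, through \eqref{eq:gweighted}. The angular derivative $g'$ is supported in $Q_1\cup Q_3$, where the weight is $1$, and this gives the $M$-independent constant $16/\pi$.

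On the disk $B_{r_0}(0)$, where $\chi=1$, we compute
\[
\int_{B_{r_0}}\alpha^B|\nabla v_M|^2\,\dd x
=\pi\Bigl(M+\tfrac13\Bigr)\int_\varepsilon^{r_0}|A_M'|^2r\,\dd r+\frac{16}{\pi}\int_0^{r_0}\frac{|A_M|^2}{r}\,\dd r .
\]
Since $A_M'=1/(r\sqrt M)$ on $(\varepsilon,r_0)$, the first radial integral equals $M^{-1}\log(r_0/\varepsilon)=M^{-1/2}$, so the first term is $\pi(M+1/3)M^{-1/2}\le 2\pi\sqrt M$. For the second term, $|A_M|\le1$ and $A_M=0$ on $(0,\varepsilon)$, so the integral is at most $\int_\varepsilon^{r_0}r^{-1}\,\dd r=\sqrt M$. (Substituting $t=\log(r/\varepsilon)$ gives the sharper value $\int_0^{\sqrt M}t^2/M\,\dd t=\sqrt M/3$, but the crude bound suffices.) The contribution of the disk is therefore $O(\sqrt M)$. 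This cancellation is the heart of the lemma: the logarithmic profile balances the $M$-weighted radial energy against the unweighted angular energy, with $\varepsilon=r_0e^{-\sqrt M}$ chosen to make both of order $\sqrt M$.

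On $U_1\setminus B_{r_0}(0)$ we have $v_M=g(\theta)$, so $|\nabla v_M|^2=r^{-2}|g'|^2$. Since $U_1\subset B_{\sqrt2 r_1}$, this region contributes at most $\frac{16}{\pi}\log(\sqrt2 r_1/r_0)$, a constant independent of $M$. On $U_2\setminus\overline U_1$ we have $\alpha^B=1$, $A_M=1$, and $|g|\le1$, so
\[
|\nabla v_M|\le|\nabla\chi|+r^{-1}|g'|,
\]
with $r\ge r_1$ there. This gives an $M$-independent bound $2\|\nabla\chi\|_{L^2}^2+2(4/\pi)^2r_1^{-2}|U_2|$.

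Summing the three contributions gives $\|v_M\|_{\alpha^B}^2\le 2\pi\sqrt M+C$ with $C$ depending only on $r_0,r_1,r_2,\chi$. Since $M>1$, we have $C\le C\sqrt M$, so $C_B=2\pi+C$ works. The only delicate step is the disk computation; the rest consists of routine bounded terms. One should also check that $\alpha^B$ on $B_{r_0}$ agrees with the angular pattern, which holds because $B_{r_0}\subset U_1$.
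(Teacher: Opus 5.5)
Your argument is correct and follows essentially the same route as the paper. You separate variables on $B_{r_0}(0)$ via \eqref{eq:gweighted}--\eqref{eq:gprimeweighted} to obtain an $O(\sqrt M)$ bound, and you bound the outer region by an $M$-independent constant, since there $g'$ lives in the coefficient-one quadrants or $\alpha^B=1$. You use the exact orthogonal polar splitting and bound the outer pieces explicitly, whereas the paper inserts a factor $2$ and lumps them into $C_\chi$. One bookkeeping slip: your final tally $2\pi\sqrt M+C$ drops the angular disk term $\frac{16}{\pi}\sqrt M$ (or $\frac{16}{3\pi}\sqrt M$ with the sharp value), so $C_B$ should be $2\pi+\frac{16}{\pi}+C$, which does not affect the conclusion.
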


\begin{proof}
In $B_{r_0}(0)$ one has $\chi=1$, and
\[
  |\nabla(A_M g)|^2
  \le 2|A_M'(r)|^2|g(\theta)|^2
      +2\frac{A_M(r)^2}{r^2}|g'(\theta)|^2.
\]
The radial integral is
\begin{equation*}
  \int_\varepsilon^{r_0}|A_M'(r)|^2r\,\dd r
  =\frac1M\int_\varepsilon^{r_0}\frac{\dd r}{r}
  =\frac1{\sqrt M}.
\end{equation*}
Combining this with \eqref{eq:gweighted} gives
\begin{equation}\label{eq:vM-radial}
 \begin{aligned}  
  &\int_{B_{r_0}(0)}
     \alpha^B|A_M'(r)|^2|g(\theta)|^2\,\dd x\\
  &=\left(\int_\varepsilon^{r_0}|A_M'(r)|^2r\,\dd r\right)
     \left(\int_0^{2\pi}\alpha^B(\theta)|g(\theta)|^2\,\dd\theta\right)\\
     &=\frac{\pi}{\sqrt M}\left(M+\frac13\right)
     \le \frac{4\pi}{3}\sqrt M.
\end{aligned} 
\end{equation}

For the angular part,
\begin{equation*}
 \int_\varepsilon^{r_0}\frac{A_M(r)^2}{r}\,\dd r
 =\frac1M\int_0^{\sqrt M}s^2\,\dd s
 =\frac{\sqrt M}{3}.
\end{equation*}
Since $g'$ is supported in the two
coefficient-one quadrants, \eqref{eq:gprimeweighted} gives
\begin{equation}\label{eq:vM-angular}
  \begin{aligned}
  &\int_{B_{r_0}(0)}
    \alpha^B\frac{A_M(r)^2}{r^2}|g'(\theta)|^2\,\dd x\\
  &=\left(\int_\varepsilon^{r_0}\frac{A_M(r)^2}{r}\,\dd r\right)
    \left(\int_0^{2\pi}\alpha^B(\theta)|g'(\theta)|^2\,\dd\theta\right)
    =\frac{16}{3\pi}\sqrt M.
\end{aligned}
\end{equation}

Since $A_M=1$ on $(\supp\chi)\setminus B_{r_0}(0)$, define the outer-region
contribution 
\begin{equation*}
 C_\chi:=\int_{(\supp\chi)\setminus B_{r_0}(0)}\alpha^B|\nabla(\chi A_M g)|^2\,\dd x=\int_{(\supp\chi)\setminus B_{r_0}(0)}
       \alpha^B|\nabla(\chi g)|^2\,\dd x.
\end{equation*}
This constant is independent of $M$: on $U_1$ the derivative $g'$ is supported in $Q_1\cup Q_3$, where $\alpha^B=1$, while $\nabla\chi$ is supported in $U_2\setminus\overline U_1$, where $\alpha^B=1$ as well.  

Applying
\eqref{eq:vM-radial} and \eqref{eq:vM-angular} to the radial and angular terms
yields
\begin{align*}
  \|v_M\|_{\alpha^B}^2
  &=\int_{B_{r_0}(0)}\alpha^B|\nabla(A_M g)|^2\,\dd x+C_\chi\\
  &\le 2\int_{B_{r_0}(0)}
       \alpha^B|A_M'(r)|^2|g(\theta)|^2\,\dd x\\
  &\quad+2\int_{B_{r_0}(0)}
       \alpha^B\frac{A_M(r)^2}{r^2}|g'(\theta)|^2\,\dd x
     +C_\chi\\
  &\le \frac{2\pi}{\sqrt M}\left(M+\frac13\right)
       +\frac{32}{3\pi}\sqrt M+C_\chi\\
  &=\left(2\pi+\frac{32}{3\pi}\right)\sqrt M
       +\frac{2\pi}{3\sqrt M}+C_\chi
   \le C_B\sqrt M,
\end{align*}
where $C_B$ has been enlarged if necessary.  This proves the result.
\end{proof}

The preceding pairing and energy estimates can now be combined through the
dual characterization of the solution norm to obtain the lower bound for
Problem B.
\begin{lemma}\label{lem:B-lower}
For $M\geq M_0$, we have
\begin{equation*}
  \|u^B\|_{\alpha^B}\ge C_B^{-1/2}M^{-1/4}.
\end{equation*}
\end{lemma}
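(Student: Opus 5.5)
The plan is to use the dual characterization of the energy norm of the exact solution, exactly as in the proof of Lemma~\ref{lem:A-upper}, but now from below. Since $u^B\in H_0^1(\Omega)$ solves \eqref{eq:pde} with $\alpha=\alpha^B$, for every nonzero $v\in H_0^1(\Omega)$ the Cauchy--Schwarz inequality in the $a_{\alpha^B}$ inner product gives
\[
  (f,v)_{L^2(\Omega)}=a_{\alpha^B}(u^B,v)\le \|u^B\|_{\alpha^B}\|v\|_{\alpha^B},
  \qquad\text{hence}\qquad
  \|u^B\|_{\alpha^B}\ge \frac{(f,v)_{L^2(\Omega)}}{\|v\|_{\alpha^B}}.
\]

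I will take $v=v_M$, which is admissible because $v_M\in H_0^1(\Omega)$, as noted after its definition. For $M\ge M_0$, Lemma~\ref{lem:pairing} bounds the numerator below by $1$. Lemma~\ref{lem:vMenergy} bounds the denominator above by $\|v_M\|_{\alpha^B}\le C_B^{1/2}M^{1/4}$. Combining these yields $\|u^B\|_{\alpha^B}\ge C_B^{-1/2}M^{-1/4}$, which is the claim.

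There is no real obstacle here, since the substantive work is already contained in Lemmas~\ref{lem:pairing} and~\ref{lem:vMenergy}. The only points to check are that $v_M\neq0$, so the quotient is defined (this follows from the positive pairing), and that both lemmas hold on the same range $M\ge M_0$. The latter is true because Lemma~\ref{lem:vMenergy} holds for all $M>1$ once $C_B$ is enlarged.
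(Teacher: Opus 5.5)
Your proposal is correct and is essentially the paper's proof: the paper also uses the dual characterization $\|u^B\|_{\alpha^B}=\sup_{v\ne0}(f,v)_{L^2(\Omega)}/\|v\|_{\alpha^B}$, tests with $v_M$, and invokes Lemmas~\ref{lem:pairing} and~\ref{lem:vMenergy}. Your extra checks are valid details that the paper leaves implicit: that $v_M\neq0$, and that the bound of Lemma~\ref{lem:vMenergy} holds on the whole range $M\ge M_0$.
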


\begin{proof}
Using Lemmas~\ref{lem:pairing} and~\ref{lem:vMenergy}, we obtain
\begin{equation*}
   \|u^B\|_{\alpha^B}
=\sup_{0\ne v\in H_0^1(\Omega)}
       \frac{(f,v)_{L^2(\Omega)}}{\|v\|_{\alpha^B}}\ge \frac{(f,v_M)_{L^2(\Omega)}}{\|v_M\|_{\alpha^B}}\ge C_B^{-1/2}M^{-1/4}.
\end{equation*}
This completes the proof.
\end{proof}

\section{The no-go theorem}\label{sec:nogo}
In this section, we present the main no-go theorem for a posteriori error estimation for interface problems. 

Although $\mathcal P^A$ and $\mathcal P^B$ in \eqref{eq:PAPB} differ globally, their local data
are indistinguishable on every patch where an indicator can be nonzero, as the
next lemma shows.
\begin{lemma}\label{lem:eta-equal}
Let $\eta$ be a locally computable estimator with range $L$ that satisfies the efficiency estimate \eqref{eq:eff}. Then
\[
\eta_K(\mathcal{P}^A)
  =
  \eta_K(\mathcal{P}^B)\qquad\text{for all }K\in\mathcal{T}_h.
\]
\end{lemma}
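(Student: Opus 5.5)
The plan is to split the elements of $\mathcal T_h$ into two classes according to whether the $L$-layer patch $\omega_L(K)$ sees any part of the load support $T_+\cup T_-$. On the first class we use locality together with the fact that $\alpha^A$ and $\alpha^B$ coincide on $U_1$. On the second class we use Lemma~\ref{lem:null-patch}. Recall from \eqref{eq:uhzero} that $u_h^A=u_h^B=0$. Both instances also carry the same mesh and the same load $f$, so $\mathcal P^A$ and $\mathcal P^B$ can only differ through the coefficient, and $\alpha^A=\alpha^B$ on $U_1$.

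\emph{Case 1: $\distT(K,\mathcal S_0)\le L$.} Let $T$ be an element with $\distT(T,K)\le L$. The triangle inequality for the graph distance gives $\distT(T,\mathcal S_0)\le 2L$, so $T\in\mathcal N_{2L}(\mathcal S_0)$. By property (M3), $\overline T\subset U_1$. Hence $\omega_L(K)\subset U_1$, and $\alpha^A|_{\omega_L(K)}=\alpha^B|_{\omega_L(K)}$. The restricted meshes, loads and finite element solutions agree trivially on $\omega_L(K)$. Therefore $\mathcal P^A|_{\omega_L(K)}=\mathcal P^B|_{\omega_L(K)}$, and Definition~\ref{def:local-computable} yields $\eta_K(\mathcal P^A)=\eta_K(\mathcal P^B)$. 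Both instances are admissible: $\alpha^A$ and $\alpha^B$ take only the values $1$ and $M$, and each attains both values, since $\alpha^B=M$ on $T_+\subset U_1\cap Q_2$.

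\emph{Case 2: $\distT(K,\mathcal S_0)>L$.} Every element $T$ with $\distT(T,K)\le L$ then lies outside $\mathcal S_0$. By (M2), such a $T$ is neither one of the $K_j^\pm$ nor any other element inside $T_\pm$. By \eqref{eq:fdef}, $f=0$ on $T$. So $f|_{\omega_L(K)}=0$, and $u_h|_{\omega_L(K)}=0$ holds in both instances. Lemma~\ref{lem:null-patch}, which uses the efficiency bound \eqref{eq:eff}, then gives $\eta_K(\mathcal P^A)=0=\eta_K(\mathcal P^B)$.

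The only step needing care is the inclusion $\omega_L(K)\subset U_1$ in Case 1. This is exactly why (M3) is stated with $2L$ rather than $L$ layers: the patch of an element within $L$ layers of $\mathcal S_0$ reaches up to $2L$ layers from $\mathcal S_0$. A second, minor point is that $\omega_L(K)$ is the interior of a union of closed elements. The restricted mesh $\mathcal T_h|_{\omega_L(K)}$ therefore consists precisely of the elements within distance $L$ of $K$, and the a.e. equalities only need to be checked on those elements.
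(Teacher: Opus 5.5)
Your proposal is correct and follows the paper's proof essentially verbatim. It uses the same two-case split on $\distT(K,\mathcal S_0)$: the triangle inequality together with (M3) places $\omega_L(K)$ inside $U_1$ in one case, and Lemma~\ref{lem:null-patch} handles the other (the paper lists the cases in the opposite order). Your remarks on the admissibility of $\alpha^B$ and on $\mathcal T_h|_{\omega_L(K)}$ consisting exactly of the elements within distance $L$ of $K$ spell out two points the paper uses without stating them.
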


\begin{proof}
Fix $K\in\mathcal{T}_h$ and consider the following two cases.

Case 1.  If $\distT(K,\mathcal S_0)>L$, then $f=0$ on the patch
$\omega_L(K)$. Since $u_h^A=u_h^B=0$ on $\omega_L(K)$ (see \eqref{eq:uhzero}),
Lemma~\ref{lem:null-patch}, applied separately to the two coefficient fields, gives
\[
  \eta_K(\mathcal{P}^A)=\eta_K(\mathcal{P}^B)=0.
\]

Case 2.  Suppose instead that $\distT(K,\mathcal S_0)\le L$.  Then there is $K_S\in\mathcal S_0$ with
\[
  \distT(K,K_S)\le L.
\]
For every mesh triangle $T\in\mathcal{T}_h$ with ${\rm int}(T)\subset\omega_L(K)$, we have 
\[
  \distT(T,\mathcal S_0)
  \le \distT(T,K)+\distT(K,K_S)
  \le2L.
\]
Hence $T\in\mathcal N_{2L}(\mathcal S_0)$, and by Property~(M3), the whole patch $\omega_L(K)$ lies in $U_1$.  On $U_1$ the two coefficients $\alpha^A$ and $\alpha^B$ agree by \eqref{eq:alphaB}.  Thus $\mathcal{P}^A|_{\omega_L(K)}=\mathcal{P}^B|_{\omega_L(K)}$, and Definition~\ref{def:local-computable} yields $\eta_K(\mathcal{P}^A)=\eta_K(\mathcal{P}^B)$.
\end{proof}

Therefore, we have shown that the two problem instances produce identical error indicators. Combining this indistinguishability with the contrasting energy estimates from Section~\ref{sec:instances}, we obtain the following central estimate for the product of the reliability and efficiency constants.
\begin{theorem}\label{thm:nogo}
Fix any $L\in\mathbb{N}_0$. Let $\eta$ be a locally computable error estimator with range $L$ satisfying \eqref{eq:rel}--\eqref{eq:eff}, and suppose that its data-oscillation term satisfies \eqref{eq:osc-consistency}. Then
\begin{equation*}
  C_{\rm rel}(M)C_{\rm eff}(M)
  \ge (C_A\sqrt{C_B})^{-1} M^{1/4},\qquad\forall\, M\geq M_0.
\end{equation*}
\end{theorem}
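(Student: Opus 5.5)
The plan is to chain the two contrasting energy bounds through the common estimator value. By \eqref{eq:uhzero} both finite element solutions vanish, so the energy errors are exactly the energy norms of the exact solutions: $\|u^A-u_h^A\|_{\alpha^A}=\|u^A\|_{\alpha^A}$ and $\|u^B-u_h^B\|_{\alpha^B}=\|u^B\|_{\alpha^B}$. By \eqref{eq:osc-zero} the oscillation term vanishes for both admissible pairs $(\mathcal T_h,\alpha^A)$ and $(\mathcal T_h,\alpha^B)$, since $f\in P_0(\mathcal T_h)$. Both pairs have contrast $M$, so the same constants $C_{\rm rel}(M)$ and $C_{\rm eff}(M)$ apply to each instance.

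First, apply reliability \eqref{eq:rel} to $\mathcal P^B$ and obtain $\|u^B\|_{\alpha^B}\le C_{\rm rel}(M)\,\eta(\mathcal P^B)$. Next, use Lemma~\ref{lem:eta-equal}, which gives $\eta_K(\mathcal P^A)=\eta_K(\mathcal P^B)$ for every $K$. Summing the squares over $K\in\mathcal T_h$ then yields $\eta(\mathcal P^A)=\eta(\mathcal P^B)$. Then apply efficiency \eqref{eq:eff} to $\mathcal P^A$ and obtain $\eta(\mathcal P^A)\le C_{\rm eff}(M)\|u^A\|_{\alpha^A}$. Combining the three steps gives
\[
  \|u^B\|_{\alpha^B}\le C_{\rm rel}(M)C_{\rm eff}(M)\,\|u^A\|_{\alpha^A}.
\]

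Finally, for $M\ge M_0$ insert Lemma~\ref{lem:B-lower} on the left and Lemma~\ref{lem:A-upper} on the right:
\[
  C_B^{-1/2}M^{-1/4}\le C_{\rm rel}(M)C_{\rm eff}(M)\,C_A M^{-1/2}.
\]
Rearranging gives $C_{\rm rel}(M)C_{\rm eff}(M)\ge (C_A\sqrt{C_B})^{-1}M^{1/4}$. Since $C_A>0$ and $M^{-1/2}>0$, the division is legitimate.

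No step is a real obstacle, because the substantive work is already contained in the preceding lemmas. The only point needing care is confirming that \eqref{eq:rel} and \eqref{eq:eff} hold with the same constants for both instances. This holds because $\alpha^B$ is also $\{1,M\}$-valued, piecewise constant on $\mathcal T_h$ by (M1), and attains both values, so both instances are admissible with contrast $M$.
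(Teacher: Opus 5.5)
Your proposal is correct and follows the paper's proof essentially step for step. You combine reliability on $\mathcal P^B$, efficiency on $\mathcal P^A$, the equality $\eta(\mathcal P^A)=\eta(\mathcal P^B)$ from Lemma~\ref{lem:eta-equal}, vanishing oscillation and $u_h^A=u_h^B=0$, and then Lemmas~\ref{lem:B-lower} and~\ref{lem:A-upper}; your explicit check that $(\mathcal T_h,\alpha^B)$ is admissible with contrast $M$ is a small point the paper leaves implicit.
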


\begin{proof}
By Lemma~\ref{lem:eta-equal},
\[
  \eta(\mathcal{P}^A)=\eta(\mathcal{P}^B).
\]
By \eqref{eq:osc-zero}, the data oscillation vanishes for Problems A and B. It then follows from reliability \eqref{eq:rel} for Problem B, Lemma~\ref{lem:B-lower}, and $u_h^B=0$ that
\begin{equation}\label{eq:uBbound}
    C_B^{-1/2}M^{-1/4}
  \le \|u^B-u_h^B\|_{\alpha^B}
  \le C_{\rm rel}(M)\eta(\mathcal P^B).
\end{equation}
On the other hand, the efficiency bound \eqref{eq:eff} for Problem A, Lemma~\ref{lem:A-upper}, and $u_h^A=0$ give
\begin{equation}\label{eq:uAbound}
\eta(\mathcal P^A)
  \le C_{\rm eff}(M)\|u^A-u_h^A\|_{\alpha^A}
  \le C_A C_{\rm eff}(M)M^{-1/2}.
\end{equation}
Using \eqref{eq:uBbound}, \eqref{eq:uAbound}, and $\eta(\mathcal{P}^A)=\eta(\mathcal{P}^B)$, we obtain
\[
  C_B^{-1/2}M^{-1/4}
  \le C_A C_{\rm rel}(M)C_{\rm eff}(M)M^{-1/2}.
\]
The proof is complete. 
\end{proof}

In particular, there do not exist constants $C_{\rm rel},C_{\rm eff}$ independent of the coefficient contrast $M$ such that both reliability and efficiency hold for all admissible problems.
Since $(1+\log M)^q=o(M^{1/4})$ for every fixed $q\ge0$,
Theorem~\ref{thm:nogo} even rules out estimators that are robust up to any
fixed logarithmic factor in the contrast.

\begin{remark}
There are a few error estimators in the literature accompanied by vertex-patch
\cite{BartelsCarstensen2002,MorinNochettoSiebert2003} or edgewise
\cite{CarstensenVerfurth1999,FeischlPagePraetorius2014} oscillations that
violate the consistency condition \eqref{eq:osc-consistency}. The works
\cite{BartelsCarstensen2002,CarstensenVerfurth1999,FeischlPagePraetorius2014}
concern the Poisson equation, while the results in
\cite{MorinNochettoSiebert2003} for variable-coefficient elliptic equations
are not devoted to contrast-robust a posteriori error estimation.

Using a scaling transformation $\alpha\mapsto a\alpha$, the
same two problem instances can be extended to show that estimators with the vertex/edge-oriented patchwise oscillations above cannot achieve coefficient-contrast robustness. Their natural coefficient-weighted (by the factor $\alpha^{-1/2}$)
analogues can be treated by a minor modification of the proof of
Theorem~\ref{thm:nogo}. Since this extension requires
additional assumptions, we
omit the details to keep the presentation focused.

Some quantitative restriction, such as \eqref{eq:osc-consistency}, is necessary if the additional term in \eqref{eq:rel}--\eqref{eq:eff} is to represent genuine data oscillation: without quantitative control, one could artificially define a “data term” large enough to dominate the entire energy error, making any reliability statement meaningless.
\end{remark}


\bibliographystyle{siamplain}

\end{document}